\documentclass[12pt]{article}
\usepackage[dvips]{graphics}
\usepackage{bm,enumerate,amscd,amssymb,amsthm,amsmath,subfigure,graphicx,youngtab,young}
\usepackage{dsfont,pgfkeys,pgfopts}
\usepackage{pifont}
\usepackage{amsfonts}
\usepackage{mathrsfs}
\usepackage{xypic}
\usepackage{color}

\def\Sum{\textstyle\sum\limits}
\def\ssum{\textstyle\sum}

\def\Prod{\textstyle\prod\limits}
\def\pprod{\textstyle\prod}

\def\ooplus{\textstyle\bigoplus}

\def\'{\prime}  \def\''{\prime\prime}

\def\det{\mbox{\rm det}}
\def\And{\mbox{\rm ~and~}}

\def\For{\mbox{\rm ~for~}}

\def\mod{\mbox{\rm mod~}}
\def\sign{\mbox{\rm sign}}

\def\({\left(}\def\){\right)}

\def\row{\mbox{\rm row}}
\def\col{\mbox{\rm col}}

\def\dd{{\scriptscriptstyle[d]}}
\def\d'{{\scriptscriptstyle[d']}}

\theoremstyle{plain}
\newtheorem{theorem}{Theorem}[section]
\newtheorem{lemma}[theorem]{Lemma}
\newtheorem{proposition}[theorem]{Proposition}

\newtheorem{corollary}[theorem]{Corollary}
\newtheorem{example}[theorem]{Example}

\newtheorem{remark}[theorem]{Remark}

\linespread{1.1}
\numberwithin{equation}{section}
\setlength{\textheight}{8.7in}
\addtolength{\hoffset}{-10mm}
\addtolength{\voffset}{-15mm}
\addtolength{\textwidth}{20mm}
\begin{document}
\title{\bf{Truncated Homogeneous Symmetric Functions}}

\author{Houshan Fu\\
\small School of Mathematics and Econometrics\\
\small Hunan University\\
\small Changsha, Hunan, China\\
\small fuhoushan@hnu.edu.cn \\ \\
\and
Zhousheng Mei\thanks{Supported by Hunan Provincial Innovation Foundation for Postgraduate (CX2018B215)}\\
\small College of Mathematics and Physics\\
\small Wenzhou University \\
\small Wenzhou, Zhejiang, China\\
\small 20190316@wzu.edu.cn\\ \\
}

\date{}
\maketitle
\begin{abstract}
Extending the elementary and complete homogeneous symmetric functions, we introduce the truncated homogeneous symmetric function $h_{\lambda}^{\dd}$ in  $(\ref{THSF})$ for any integer partition $\lambda$, and show that the transition matrix from $h_{\lambda}^{\dd}$ to the power sum symmetric functions $p_\lambda$ is given by \[M(h^{\dd},p)=M'(p,m)z^{-1}D^{\dd},\]
where $D^{\dd}$ and $z$ are nonsingular diagonal matrices. Consequently,  $\{h_{\lambda}^{\dd}\}$ forms a basis of the ring $\Lambda$ of symmetric functions. In addition, we show that the generating function $H^{\dd}(t)=\ssum_{n\ge 0}h_n^{\dd}(x)t^n$  satisfies
\[\omega(H^{\dd}(t))=\left(H^{\dd}(-t)\right)^{-1},\]
where $\omega$ is the involution of $\Lambda$ sending each elementary symmetric function $e_\lambda$ to the complete homogeneous symmetric function $h_\lambda$.

\noindent{\small {\bf Keywords:} Truncated Homogeneous Symmetric Functions, Petrie Symmetric Functions, Complete Homogeneous Symmetric Functions, Power Sum Symmetric Functions, Plethysm}
\end{abstract}

\section{Introduction}

Let $\Lambda=\ooplus_{n\ge 0}\Lambda^n$ (or $\Lambda[x]=\ooplus_{n\ge 0}\Lambda^n[x]$) be the ring of symmetric functions in $x$ over $\Bbb{Q}$ (or $\Bbb{Z}$, $\Bbb{R}$,  $\Bbb{C}$) consisting of those formal power series of $x$ who are invariant under permutations of the independent variable sequence $x=(x_{1},x_{2},\ldots)$, and $\Lambda^n$ the subspace of homogeneous symmetric functions of degree $n$. Two classical bases of $\Lambda$ are elementary and complete homogeneous symmetric functions,  denoted $e_\lambda(x)$ and $h_\lambda(x)$ respectively and  defined as follows, for any integer partition $\lambda=(\lambda_1\ge \lambda_2\ge \cdots\ge \lambda_l\ge 0)$,
\begin{eqnarray*}
  &&E(t)=\Sum_{n\ge 0}e_n(x)t^n=\Prod_{k\ge 1}(1+x_kt)\quad \And \quad e_\lambda(x)=\Prod_i e_{\lambda_i}(x),\\
  &&H(t)=\Sum_{n\ge 0}h_n(x)t^n=\Prod_{k\ge 1}(1+x_kt+x_k^2t^2+\cdots )\quad \And \quad h_\lambda(x)=\Prod_i h_{\lambda_i}(x).
\end{eqnarray*}
Inspired by the above definitions, for any positive integer $d$, we introduce the {\bf truncated homogeneous symmetric functions} $h_{\lambda}^{{\dd}}$ defined by
\begin{equation}\label{THSF}
H^{\dd}(t)=\Sum_{n\ge 0}h_n^{\dd}(x)t^n=\Prod_{k\ge 1}(1+x_kt+x_k^2t^2+\cdots+x_k^dt^d)\quad \And \quad h_\lambda^{\dd}(x)=\Prod_i h_{\lambda_i}^{\dd}(x),
\end{equation}
which extends elementary and complete homogeneous symmetric functions by $e_{\lambda}(x)=h_{\lambda}^{\scriptscriptstyle[1]}(x)$ and $h_{\lambda}(x)=h_{\lambda}^{\scriptscriptstyle[\infty]}(x)$.

In the exercise 7.91 of \cite{Stanley}, Stanley proposed the formal power series
\[
F(t)=\Sum_{n\geq 0}f_{n}(x)t^{n} \quad{\rm with}\quad f_{0}=1,
\]
and defined the Schur function $s_{\lambda}^F(x)$ as the coefficient of $s_{\lambda}(t_1,t_2,\ldots)$  in the expansion of $F(t_1)F(t_2)\cdots$. Here we clarify that in general, $f_n(x)$ is  not the forgotten symmetric functions of Macdonald \cite{Macdonald}.  The exercise 7.91 concludes that $s_{\lambda}^F(x)$ extends the canonical Schur function $s_\lambda(x)$, and presents some other basic properties on $s_{\lambda}^F(x)$ by specializing $F(t)$.  More results on the formal power series $F(t)$ and  the Schur function $s_{\lambda}^F(x)$ can be found in \cite{King,Ram1991,Ram1996}.

The truncated homogeneous symmetric functions $h^\dd_n$ is nothing but defined by the specialization $F(t)=H^{\dd}(t)$. In this paper, we make a systematic study on $h^{\dd}_{\lambda}$. We show in Section 2 that for each positive integer $d$, the truncated homogeneous  functions $h^{\dd}_{\lambda}$ form a basis of the ring $\Lambda$. Also, we present the transition matrices from the basis $\{h^{\dd}_{\lambda}:\lambda\vdash n\}$ to the classical bases of $\Lambda^n$, including the monomial symmetric functions $\{m_\lambda:\lambda\vdash n\}$, the complete homogeneous symmetric functions $\{h_{\lambda}:\lambda\vdash n\}$, the elementary symmetric functions $\{e_{\lambda}:\lambda\vdash n\}$,  the power sum symmetric functions $\{p_{\lambda}:\lambda\vdash n\}$, and  the Schur functions $\{s_{\lambda}:\lambda\vdash n\}$, see \cite{Macdonald,Stanley}  more details on these symmetric functions. In Section 3, we first show that $\omega(H^{\dd}(t))=
\left(H^{\dd}(-t)\right)^{-1}$ if $\omega$ is the involution of $\Lambda$ sending each elementary symmetric function $e_\lambda$ to the complete homogeneous symmetric function $h_\lambda$. Next we introduce the endomorphism $\omega^\dd$ of $\Lambda$ with $\omega^\dd(e_n)=h_n^\dd$ whose eigenvectors turn out to be all power sum symmetric functions, and obtain $\omega^\dd\omega^\d'=\omega^\d'\omega^\dd$ for any positive integers $d$ and $d'$.

Most recently, Grinberg \cite{Grinberg} independently introduced the function $h_n^{\dd}(x)$,  which was denoted by $G(d+1,n)$ and called the $(d+1,n)$-Petrie symmetric functions. Some results of this paper have been presented in \cite{Grinberg}, but obtained in a different method.   More precisely, our Proposition  \ref{pro-coefficient} recovers Theorem 4.4 and answers Questions 4.7 of \cite{Grinberg}; our Theorem \ref{main-1} is equivalent to Theorem 6.3 of \cite{Grinberg};   the proof of Theorem \ref{main-2} implies Theorem 6.2 of \cite{Grinberg}. Please see \cite{Grinberg1} for more detailed results on Petrie symmetric functions.

\section{Truncated Homogeneous Symmetric Functions}

From the definition $(\ref{THSF})$, it is clear that  the $n$-th truncated homogeneous symmetric function  $h^{\dd}_{n}$ can be written as
\begin{eqnarray}\label{def1}
h^{\dd}_{n}(x)=\Sum_{0\leq\alpha_{i}\leq d\atop \sum\alpha_{i}=n}x^{\alpha_{1}}_{1}x^{\alpha_{2}}_{2}\ldots\;.
\end{eqnarray}
It follows that $h^{\scriptscriptstyle[1]}_{n}=e_{n}$ and $h^{\scriptscriptstyle[\infty]}_{n}=h_{n}$. Then $h^{\scriptscriptstyle[1]}_{\lambda}=e_{\lambda}$ and $h^{\scriptscriptstyle[\infty]}_{\lambda}=h_{\lambda}$ consequently.

Let $A=(a_{ij})_{i,j\geq 1}$ be an integer matrix with finitely many nonzero entries and with row and column sums
\begin{eqnarray*}
r_{i}=\Sum_{j}a_{ij}\quad\quad\And\quad\quad c_{i}=\Sum_{i}a_{ij}.
\end{eqnarray*}
Define the row-sum vector $\row(A)$ and column-sum vector $\col(A)$ to be
\begin{eqnarray*}
\row(A)=(r_{1},r_{2},\ldots) \quad\quad \And\quad\quad \col(A)=(c_{1},c_{2},\ldots).
\end{eqnarray*}
Given a positive integer $d$, we say $A=(a_{ij})_{i,j\geq 1}$ is a $[0,d]$-matrix if each $ a_{ij}$ is an integer with  $0\leq a_{ij}\leq d$. Denote by $M^{\dd}_{\lambda\mu}$ the number of $[0,d]$-matrices $A$ with $\row(A)=\lambda$ and $\col(A)=\mu$. From (\ref{def1}), we immediately have

\begin{proposition}\label{m-expansion}
If $d$ is a positive integer and $\lambda\vdash n$, we have
\begin{eqnarray}\label{m-eqn}
h^{\dd}_{\lambda}=\Sum_{\mu\vdash n}M_{\lambda\mu}^{\dd}m_{\mu}.
\end{eqnarray}
In particular,
$
e_{\lambda}=\Sum_{\mu\vdash n}M^{\scriptscriptstyle[1]}_{\lambda\mu}m_{\mu}  \And h_{\lambda}=\Sum_{\mu\vdash n}M^{\scriptscriptstyle[\infty]}_{\lambda\mu}m_{\mu}.
$
\end{proposition}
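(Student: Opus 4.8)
The plan is to read off the expansion directly from the monomial expansion
$(\ref{def1})$ of $h^{\dd}_{n}$ together with the product structure in $(\ref{THSF})$.
First I would recall that for a single part $n$, equation $(\ref{def1})$ already writes
$h^{\dd}_{n}$ as a sum of monomials $x_{1}^{\alpha_{1}}x_{2}^{\alpha_{2}}\cdots$ over all
weak compositions $\alpha$ with $0\le\alpha_{i}\le d$ and $\ssum_{i}\alpha_{i}=n$. Collecting the
monomials whose exponent multiset is a fixed partition $\mu\vdash n$, the coefficient of
$m_{\mu}$ in $h^{\dd}_{n}$ is the number of such $\alpha$ that rearrange to $\mu$; equivalently,
the number of $[0,d]$-matrices with a single row equal to $\mu$ (read as a weak composition)
— but this single-row case is subsumed by the general statement once we pass to $h^{\dd}_{\lambda}$,
so I would simply treat the general product.

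Next, for a partition $\lambda=(\lambda_{1}\ge\cdots\ge\lambda_{l})$ I would expand the product
$h^{\dd}_{\lambda}=\pprod_{i}h^{\dd}_{\lambda_{i}}$ using $(\ref{def1})$ for each factor. Choosing,
for the $i$-th factor, a weak composition $(a_{i1},a_{i2},\ldots)$ with $0\le a_{ij}\le d$ and
$\ssum_{j}a_{ij}=\lambda_{i}$, the corresponding term in the product is
$\pprod_{i,j}x_{j}^{a_{ij}}=x^{\col(A)}$, where $A=(a_{ij})$ is exactly a $[0,d]$-matrix with
$\row(A)=\lambda$. Hence $h^{\dd}_{\lambda}=\ssum_{A}x^{\col(A)}$, the sum ranging over all
$[0,d]$-matrices $A$ with $\row(A)=\lambda$; note each such $A$ has finitely many nonzero entries
since its row sums are finite, so the sum is well defined in $\Lambda$. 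Grouping these matrices by
the partition $\mu$ obtained from sorting $\col(A)$ — and observing $\ssum_{j}c_{j}=\ssum_{i}r_{i}
=|\lambda|=n$, so $\mu\vdash n$ — gives $h^{\dd}_{\lambda}=\ssum_{\mu\vdash n}\big(\#\{A:
\row(A)=\lambda,\ \col(A)\text{ sorts to }\mu\}\big)m_{\mu}$.

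The only point that needs a word of care is that $\col(A)$ is a weak composition, not necessarily a
partition, whereas $m_{\mu}$ is indexed by a partition; but $x^{\col(A)}$ contributes to $m_{\mu}$
precisely when the nonzero entries of $\col(A)$, sorted in weakly decreasing order, equal $\mu$,
and the number of $A$ with $\row(A)=\lambda$ and $\col(A)$ a permutation of $\mu$ is by definition
$M^{\dd}_{\lambda\mu}$. (Here one uses the standard fact that $m_{\mu}=\ssum x^{\alpha}$ over all
distinct rearrangements $\alpha$ of $\mu$, so the coefficient of each individual monomial $x^{\alpha}$
in $m_{\mu}$ is $1$.) This yields $(\ref{m-eqn})$. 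Finally the two displayed special cases follow
by setting $d=1$ (so $[0,1]$-matrices are $0$–$1$ matrices and $h^{\scriptscriptstyle[1]}_{\lambda}
=e_{\lambda}$) and letting $d\to\infty$ (so the constraint $a_{ij}\le d$ disappears and
$h^{\scriptscriptstyle[\infty]}_{\lambda}=h_{\lambda}$), recovering the classical combinatorial
descriptions of $M_{\lambda\mu}=M^{\scriptscriptstyle[\infty]}_{\lambda\mu}$ and
$M^{\scriptscriptstyle[1]}_{\lambda\mu}$. There is no real obstacle here; the mild bookkeeping is
just the translation between ``weak composition of column sums'' and ``partition indexing $m_{\mu}$,''
which is exactly what the quantity $M^{\dd}_{\lambda\mu}$ was defined to package.
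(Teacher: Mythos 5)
Your argument is correct and is essentially the paper's own proof: expand $h^{\dd}_{\lambda}=\pprod_i h^{\dd}_{\lambda_i}$ via $(\ref{def1})$, identify each term of the product with a $[0,d]$-matrix $A$ having $\row(A)=\lambda$ and exponent vector $\col(A)$, and use the symmetry of $h^{\dd}_{\lambda}$ to pass from the coefficient of the single monomial $x^{\mu}$ to the coefficient of $m_{\mu}$. The only blemish is your intermediate grouping formula counting matrices whose column sums ``sort to $\mu$'' (that count is not $M^{\dd}_{\lambda\mu}$; one must fix $\col(A)=\mu$ exactly and invoke symmetry, as your final paragraph effectively does), but this does not affect the substance of the proof.
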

\begin{proof}
Let $\mu$ be a partition of $n$. Consider the coefficient of a monomial $x^{\mu}$ in $h^{\dd}_{\lambda}=h^{\dd}_{\lambda_{1}}
h^{\dd}_{\lambda_{2}}\ldots$\;. Each monomial in $h^{\dd}_{\lambda_{i}}$ is of the form $\Prod_{j}x_{j}^{a_{ij}}$, where  $0\leq a_{ij}\leq d$ and $\Sum_{j}a_{ij}=\lambda_{i}$. Hence we must have
\[
\Prod_{i,j}x_{j}^{a_{ij}}=\Prod_{j}x_{j}^{\mu_{j}}
\]
so that $\Sum_{i}a_{ij}=\mu_{j}$. Since $h^{\dd}_{\lambda}$ is a symmetric function, the result holds.
\end{proof}
\begin{corollary}
With above notations, we have $M_{\lambda\mu}^{\dd}=M_{\mu\lambda}^{\dd}$.
\end{corollary}
\begin{example}
If $\lambda\vdash 4$ and $d=2$, then we have
\begin{eqnarray*}
h^{\scriptscriptstyle[2]}_{1111}&=&m_{4}+4m_{31}+6m_{22}+12m_{211}+24m_{1111}\\
h^{\scriptscriptstyle[2]}_{211}~&=&m_{4}+3m_{31}+4m_{22}+~7m_{211}+12m_{1111}\\
h^{\scriptscriptstyle[2]}_{22}~~&=&m_{4}+2m_{31}+3m_{22}+~4m_{211}+~6m_{1111}\\
h^{\scriptscriptstyle[2]}_{31}~~&=&\quad\quad ~~~m_{31}+2m_{22}+~3m_{211}+~4m_{1111}\\
h^{\scriptscriptstyle[2]}_{4}~~&=&\quad\quad\quad\quad\quad\quad ~m_{22}+~~m_{211}+~~~m_{1111}.
\end{eqnarray*}
\end{example}
Recall that the Kostka number $K_{\lambda\mu}$ is  the number of semistandard Young tableaux of shape $\lambda$ and type $\mu$ and
$h^{\scriptscriptstyle[\infty]}_{\mu}=h_{\mu}=\Sum_{\lambda}K_{\lambda\mu}s_{\lambda}  \;\text{and} \;h^{\scriptscriptstyle[1]}_{\mu}=e_{\mu}=\Sum_{\lambda}K_{\lambda'\mu}s_{\lambda}.$
\begin{remark} $h^{\scriptscriptstyle[\infty]}_{\lambda}$ and $h^{\scriptscriptstyle[1]}_{\lambda}$ are Schur-nonnegative. However, if $d\ne 0$ and $\infty$, $h^{\dd}_{\lambda}$ may not be Schur-nonnegative in general. {\rm (}e.g., $h^{\scriptscriptstyle[2]}_{3}(x)=\Sum_{i<j<k}x_{i}x_{j}x_{k}+\Sum_{i\neq j}x^{2}_{i}x_{j}=s_{21}-s_{111}.${\rm )}
\end{remark}

\begin{proposition}\label{m-expansion-y}
Given a positive integer $d$,  we have
  \begin{eqnarray*}
\Prod_{i,j}(1+x_{i}y_{j}+\cdots+x^{d}_{i}y^{d}_{j})
=\Sum_{\lambda}h^{\dd}_{\lambda}(y)m_{\lambda}(x)
=\Sum_{\lambda}h^{\dd}_{\lambda}(x)m_{\lambda}(y),
\end{eqnarray*}
where $\lambda$ and $\mu$ range over all partitions.
\end{proposition}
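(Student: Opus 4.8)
The plan is to expand the kernel on the left directly, recognize the matrix counts $M^{\dd}_{\lambda\mu}$ introduced just above, and then invoke Proposition \ref{m-expansion}. First I would write each factor as $1+x_iy_j+\cdots+x_i^dy_j^d=\sum_{a=0}^{d}(x_iy_j)^{a}$ and multiply over all pairs $(i,j)$. A term in the resulting expansion is obtained by choosing an exponent $a_{ij}\in\{0,1,\dots,d\}$ for each $(i,j)$, with all but finitely many $a_{ij}$ equal to $0$; it contributes the monomial $\prod_{i,j}(x_iy_j)^{a_{ij}}=\prod_i x_i^{r_i}\prod_j y_j^{c_j}$, where $r_i$ and $c_j$ are the row and column sums of the $[0,d]$-matrix $A=(a_{ij})$. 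Hence
\[
\prod_{i,j}\bigl(1+x_iy_j+\cdots+x_i^dy_j^d\bigr)=\sum_{A}\;x^{\row(A)}\,y^{\col(A)},
\]
the sum ranging over all $[0,d]$-matrices with finitely many nonzero entries.

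Next I would sort this sum by the partitions $\lambda$ and $\mu$ underlying $\row(A)$ and $\col(A)$. For fixed $\lambda,\mu$ and fixed rearrangements $\alpha$ of $\lambda$ and $\beta$ of $\mu$, the number of $A$ with $\row(A)=\alpha$ and $\col(A)=\beta$ equals $M^{\dd}_{\lambda\mu}$, since permuting rows and columns sets up a bijection with the matrices realizing $\row(A)=\lambda$, $\col(A)=\mu$. Summing the monomials $x^\alpha y^\beta$ over all such pairs $(\alpha,\beta)$ then yields $M^{\dd}_{\lambda\mu}\,m_\lambda(x)m_\mu(y)$, so
\[
\prod_{i,j}\bigl(1+x_iy_j+\cdots+x_i^dy_j^d\bigr)=\sum_{\lambda,\mu}M^{\dd}_{\lambda\mu}\,m_\lambda(x)m_\mu(y).
\]
By Proposition \ref{m-expansion}, $\sum_\mu M^{\dd}_{\lambda\mu}m_\mu(y)=h^{\dd}_\lambda(y)$, so the right-hand side equals $\sum_\lambda h^{\dd}_\lambda(y)m_\lambda(x)$; using instead $M^{\dd}_{\lambda\mu}=M^{\dd}_{\mu\lambda}$ and summing the other way --- equivalently, noting that the kernel is symmetric under interchanging the two alphabets --- gives $\sum_\mu h^{\dd}_\mu(x)m_\mu(y)$, the second claimed form.

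I do not foresee a real obstacle: this is the standard Cauchy-type expansion of a kernel, and the only step that needs a little care is the combinatorial bookkeeping, namely that the coefficient of $x^\alpha y^\beta$ depends on $\alpha$ and $\beta$ only through their sorted partitions, which is exactly what makes the products $m_\lambda(x)m_\mu(y)$ appear. (Alternatively, Proposition \ref{m-expansion} can be bypassed: for each fixed $i$, the factor $\prod_j(1+x_iy_j+\cdots+x_i^dy_j^d)$ is the generating function $H^{\dd}$ of $(\ref{THSF})$ read in the alphabet $y$ with $t=x_i$, hence equals $\sum_{n\ge0}h^{\dd}_n(y)\,x_i^{\,n}$; multiplying over $i$ and collecting monomials in $x$ by partitions gives $\sum_\lambda h^{\dd}_\lambda(y)m_\lambda(x)$ directly. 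All manipulations take place in the completed ring of formal power series, where these sums and products converge coefficientwise.)
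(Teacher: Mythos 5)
Your proof is correct and follows essentially the same route as the paper: expand the kernel as a sum over $[0,d]$-matrices to get $\sum_{\lambda,\mu}M^{\dd}_{\lambda\mu}m_\lambda(x)m_\mu(y)$, then apply Proposition \ref{m-expansion} and the $x\leftrightarrow y$ symmetry. You merely spell out more explicitly the bookkeeping step (that the coefficient of $x^\alpha y^\beta$ depends only on the sorted partitions), which the paper compresses into the remark that the product is symmetric in each alphabet.
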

\begin{proof}
Recall that $M^{\dd}_{\alpha\beta}$ is the number of matrices $A=(a_{ij})$ with $0\leq a_{ij}\leq d$ and satisfying row$(A)=\alpha$ and col$(A)=\beta$.
Note that
\[
\Prod_{i,j}(1+x_{i}y_{j}+\cdots+x^{d}_{i}y_{j}^{d})=
\Sum_{\alpha,\beta}M^{\dd}_{\alpha\beta}x^{\alpha}y^{\beta}.
\]
Since $\Prod_{i,j}(1+x_{i}y_{j}+\cdots+x^{d}_{i}y_{j}^{d})$ is a symmetric function on variables $x$ and $y$, we have
\begin{eqnarray*}
\Prod_{i,j}(1+x_{i}y_{j}+\cdots+x^{d}_{i}y_{j}^{d})&=& \Sum_{\alpha,\beta}M^{\dd}_{\alpha\beta}x^{\alpha}y^{\beta}
=\Sum_{\lambda,\mu}M^{\dd}_{\lambda\mu}m_{\lambda}(x)m_{\mu}(y).
\end{eqnarray*}
From Proposition \ref{m-expansion}, the first equality holds. The second equality is obvious by exchanging $x$ and $y$.
\end{proof}
Suppose $x=\{x_{1},x_{2},\ldots\}$ and $y=\{y_{1},y_{2},\ldots\}$. Denote $xy=\{x_{i}y_{j}:x_{i}\in x,y_{j}\in y\}$. It is clear that  $p_{n}(xy)=p_{n}(x)p_{n}(y)$ from the definition of the power sum symmetric functions $p_{n}=\Sum_{i}x_i^n$. Recall from \cite{Macdonald,Stanley} the expansion
\[
\Prod_{i,j}(1+x_{i}y_{j})=
\Sum_{\lambda}\varepsilon_{\lambda}z^{-1}_{\lambda}p_{\lambda}(x)p_{\lambda}(y),
\]
where $\lambda=(1^{n_1},2^{n_2},\ldots)$ ranges over all partitions,
\[
\varepsilon_\lambda=(-1)^{|\lambda|-\ell(\lambda)} \quad\And \quad z_\lambda=\pprod_{i\geq 1}i^{n_{i}}\cdot n_{i}!.
 \]
 Applying $p_{\lambda}(xy)=p_{\lambda}(x)p_{\lambda}(y)$, we can extend the above expansion as follows,  which is a key lemma in proving Theorem \ref{main-1}.

\begin{lemma}\label{key-lemma}Suppose $x=\{x_{1},x_{2},\ldots\}$, $y=\{y_{1},y_{2},\ldots\}$, and $z=\{z_{1},z_{2},\ldots\}$ are variables. We have
\begin{eqnarray}\label{main-eqn}
\Prod_{i,j,k}(1+x_{i}y_{j}z_{k})
=\Sum_{\lambda}\varepsilon_{\lambda}z^{-1}_{\lambda}p_{\lambda}(x)p_{\lambda}(y)p_{\lambda}(z),
\end{eqnarray}
where $\lambda$ runs through all partitions.
\end{lemma}

\begin{theorem}\label{main-1}
Suppose $\lambda\vdash n$ and $p_{\lambda}=\ssum_{\mu\vdash n}R_{\lambda\mu}m_{\mu}$. We have
\[
h^{\dd}_{\lambda}=\Sum_{\mu\vdash n}z^{-1}_{\mu}D^{\dd}_{\mu}R_{\mu\lambda}p_{\mu},
\]
where
\[
D^{\dd}_{\mu}=(-d)^{\sum_{k}n_{k(d+1)}}\quad \For\quad \mu=(1^{n_{1}},2^{n_{2}},\ldots).
\]
In particular, $D^{\scriptscriptstyle[1]}_{\lambda}=\varepsilon_{\lambda}$ and $D^{\scriptscriptstyle[\infty]}_{\lambda}=1$.
Moreover, the truncated homogeneous symmetric functions $h^{\dd}_{\lambda}$ form a basis of $\Lambda[x]$.
\end{theorem}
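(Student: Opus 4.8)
The plan is to derive the power-sum expansion of $h^{\dd}_{\lambda}$ from the Cauchy-type identity of Lemma~\ref{key-lemma}, by factoring the truncated geometric series over roots of unity. The starting point is the elementary factorization
\[
1+u+u^{2}+\cdots+u^{d}=\frac{1-u^{d+1}}{1-u}=\Prod_{\zeta}(1-\zeta u),
\]
the product being over the $(d+1)$-th roots of unity $\zeta\ne 1$ (there are $d$ of them). Setting $u=x_{i}y_{j}$ and gathering the roots of unity into a new finite alphabet $w=\{-\zeta:\zeta^{d+1}=1,\ \zeta\ne 1\}$, the generating product of Proposition~\ref{m-expansion-y} turns into a triple product:
\[
\Prod_{i,j}(1+x_{i}y_{j}+\cdots+x_{i}^{d}y_{j}^{d})=\Prod_{i,j}\Prod_{\zeta}(1-\zeta x_{i}y_{j})=\Prod_{i,j,k}(1+x_{i}y_{j}w_{k}).
\]
Hence, by Lemma~\ref{key-lemma}, this product equals $\ssum_{\lambda}\varepsilon_{\lambda}z_{\lambda}^{-1}p_{\lambda}(x)p_{\lambda}(y)p_{\lambda}(w)$.

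The next step is to evaluate the power sums of $w$. Since $\ssum_{\zeta^{d+1}=1}\zeta^{m}$ equals $d+1$ if $(d+1)\mid m$ and $0$ otherwise, removing the $\zeta=1$ term gives $p_{m}(w)=(-1)^{m}\ssum_{\zeta\ne 1}\zeta^{m}=(-1)^{m+1}c_{m}$, where $c_{m}=-d$ when $(d+1)\mid m$ and $c_{m}=1$ otherwise. For $\lambda=(1^{n_{1}},2^{n_{2}},\ldots)$ this multiplies out to
\[
p_{\lambda}(w)=(-1)^{|\lambda|+\ell(\lambda)}\Prod_{i}c_{\lambda_{i}}=(-1)^{|\lambda|+\ell(\lambda)}(-d)^{\sum_{k}n_{k(d+1)}}=(-1)^{|\lambda|+\ell(\lambda)}D^{\dd}_{\lambda},
\]
and combining with $\varepsilon_{\lambda}=(-1)^{|\lambda|-\ell(\lambda)}$ the signs cancel, leaving the clean identity
\[
\Prod_{i,j}(1+x_{i}y_{j}+\cdots+x_{i}^{d}y_{j}^{d})=\Sum_{\lambda}z_{\lambda}^{-1}D^{\dd}_{\lambda}\,p_{\lambda}(x)\,p_{\lambda}(y).
\]

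To conclude, I would match this against Proposition~\ref{m-expansion-y}, which writes the same product as $\ssum_{\lambda}h^{\dd}_{\lambda}(x)m_{\lambda}(y)$. Expanding $p_{\lambda}(y)=\ssum_{\mu}R_{\lambda\mu}m_{\mu}(y)$ on the right-hand side of the last display and reading off the coefficient of $m_{\mu}(y)$ yields $h^{\dd}_{\mu}(x)=\ssum_{\lambda}z_{\lambda}^{-1}D^{\dd}_{\lambda}R_{\lambda\mu}\,p_{\lambda}(x)$, which is precisely the asserted identity after interchanging the names of $\lambda$ and $\mu$. The basis assertion then follows at once: the matrix expressing $(h^{\dd}_{\lambda})_{\lambda\vdash n}$ in $(p_{\mu})_{\mu\vdash n}$ has $(\lambda,\mu)$-entry $z_{\mu}^{-1}D^{\dd}_{\mu}R_{\mu\lambda}$, so its determinant equals $\big(\Prod_{\mu}z_{\mu}^{-1}D^{\dd}_{\mu}\big)\det M(p,m)$, which is nonzero because $d\ge 1$ makes every $D^{\dd}_{\mu}$ nonzero and $M(p,m)$ is invertible. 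Thus $\{h^{\dd}_{\lambda}:\lambda\vdash n\}$ is a basis of $\Lambda^{n}$ for each $n$, and $\{h^{\dd}_{\lambda}\}$ is a basis of $\Lambda[x]$.

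I expect the only genuinely delicate part to be the sign bookkeeping in the second paragraph — carrying the factors $(-1)^{m}$, $\varepsilon_{\lambda}$ and $(-1)^{\ell(\lambda)}$ correctly and verifying the collapse $\Prod_{i}c_{\lambda_{i}}=(-d)^{\sum_{k}n_{k(d+1)}}$ — though this is purely mechanical. It is also worth noting that, although $w$ consists of complex numbers, all the identities above live in $\Lambda\otimes\mathbb{C}$ and the final formula has rational coefficients, so it is a bona fide identity in $\Lambda$. Alternatively, the special case $\lambda=(n)$ can be obtained with no roots of unity from $\log H^{\dd}(t)=\ssum_{m\ge 1}\tfrac{p_{m}t^{m}}{m}-\ssum_{m\ge 1}\tfrac{p_{m(d+1)}t^{m(d+1)}}{m}$ by exponentiating, and then propagated to all $\lambda$ via the $\mathbb{Q}$-algebra endomorphism of $\Lambda$ sending $p_{\mu}\mapsto D^{\dd}_{\mu}p_{\mu}$, which carries $h_{n}$ to $h^{\dd}_{n}$.
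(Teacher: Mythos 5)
Your proposal is correct and follows essentially the same route as the paper: factor $1+u+\cdots+u^{d}$ over the nontrivial $(d+1)$-th roots of unity, feed the resulting triple product into Lemma~\ref{key-lemma}, evaluate the power sums at the roots of unity to produce $D^{\dd}_{\mu}$, and compare coefficients of $m_{\lambda}(y)$ against Proposition~\ref{m-expansion-y}. Your sign bookkeeping via $p_{m}(w)=(-1)^{m+1}c_{m}$ is equivalent to the paper's computation of $(-1)^{\ell(\mu)}p_{\mu}(\xi,\ldots,\xi^{d})$, and your nonsingularity argument for the basis claim matches the paper's.
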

\begin{proof}
Let $\xi=e^{2\pi i/d+1}$ be the $(d+1)$-th primitive unit root. From Lemma \ref{key-lemma}, we have
\begin{eqnarray*}
\Prod_{i,j}(1+x_{i}y_{j}+\cdots+x^{d}_{i}y^{d}_{j})
&=&\Prod_{i}\Prod_{j}\Prod_{k=1}^{d}(1-x_{i}y_{j}\xi^{k})\\
&=&\Sum_{\mu}\varepsilon_{\mu}z^{-1}_{\mu}p_{\mu}(-\xi,\ldots,-\xi^{d})p_{\mu}(x)p_{\mu}(y).
\end{eqnarray*}
Applying $p_{\mu}(y)=\ssum_{\lambda}R_{\mu\lambda}m_{\lambda}(y)$ and Proposition \ref{m-expansion-y}, we have
\begin{eqnarray*}
\Sum_{\lambda}h^{\dd}_{\lambda}(x)m_{\lambda}(y)=\Sum_{\lambda}
\bigg(\Sum_{\mu}(-1)^{\ell(\mu)}z^{-1}_{\mu}p_{\mu}(\xi,\ldots,\xi^{d})R_{\mu\lambda}p_{\mu}(x)\bigg)
m_{\lambda}(y).
\end{eqnarray*}
Since the monomial symmetric functions $m_{\lambda}(y)$ are linearly independent over $\Lambda[x]$, we have
\[
h^{\dd}_{\lambda}(x)=
\Sum_{\mu}(-1)^{\ell(\mu)}z^{-1}_{\mu}p_{\mu}(\xi,\ldots,\xi^{d})R_{\mu\lambda}p_{\mu}(x).
\]
Note that for any positive integer $n$, $p_{n}(\xi,\ldots,\xi^{d})=d$ if $d+1\mid n$, and $-1$ otherwise. It follows that $(-1)^{\ell(\mu)}p_{\mu}(\xi,\ldots,\xi^{d})=(-d)^{\sum_{k}n_{k(d+1)}}$ for $\mu=(1^{n_{1}},2^{n_{2}},\ldots)$, which implies
\[
h^{\dd}_{\lambda}=\Sum_{\mu\vdash n}z^{-1}_{\mu}D^{\dd}_{\mu}R_{\mu\lambda}p_{\mu}.
\]
Namely, the transition matrix $M(h^{\dd},p)$ from  $\{h^{\dd}_\lambda:\lambda\vdash n\}$ to  $\{p_\lambda:\lambda\vdash n\}$ is
\[
M(h^{\dd},p)=M'(p, m)z^{-1}D^{\dd},
\]
where $z^{-1}$ and $D^{\dd}$ denote the diagonal matrices whose diagonal entries are  $z^{-1}_{\mu}$ and $D^{\dd}_{\mu}$ for $\mu\vdash n$ respectively, and $M'(p, m)$ is the transpose of the transition matrix from $\{p_\lambda: \lambda\vdash n\}$ to $\{m_\lambda: \lambda\vdash n\}$. It is clear that the matrix $M(h^{\dd},p)$ is nonsingular.
So $\{h^{\dd}_\lambda: \lambda\vdash n\}$ forms a basis of $\Lambda^n$.
\end{proof}

Denote the transition matrices $R=M(p,m)$ and $K=M(s,m)$. From Chapter I of \cite{Macdonald}, we know that
\begin{eqnarray*}
M(p,h)=zR^{*},
  \quad M(p,e)=\varepsilon zR^{*},
 \quad \And\quad M(p,s)=RK^{-1},
\end{eqnarray*}
where $R^{*}=(R')^{-1}$ is the transposed inverse of $R$, and $\varepsilon$ (resp. z) denotes the diagonal matrix with diagonal entries $\varepsilon_{\lambda}$ (resp. $z_{\lambda}$).
From Theorem \ref{main-1}, we immediately have
\begin{corollary} \label{Bases-coro}
With above notations, we have
\begin{eqnarray*}
&&M(h^{\dd},p)=R'z^{-1}D^{\dd};\\
&&M(h^{\dd},m)=R'z^{-1}D^{\dd}R;\\
&&M(h^{\dd},h)=R'D^{\dd}R^{*};\\
&&M(h^{\dd},e)=R'D^{\dd}\varepsilon R^{*};\\
&&M(h^{\dd},s)=R'z^{-1}D^{\dd}RK^{-1}.
\end{eqnarray*}

\end{corollary}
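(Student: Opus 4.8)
The plan is to obtain all five identities from Theorem~\ref{main-1} by repeatedly applying the multiplicativity of transition matrices, together with the classical formulas $M(p,h)=zR^{*}$, $M(p,e)=\varepsilon zR^{*}$ and $M(p,s)=RK^{-1}$ recalled just above. The composition rule I will use is that for any three bases $u,v,w$ of $\Lambda^{n}$ one has $M(u,w)=M(u,v)\,M(v,w)$, which is immediate from $u_{\lambda}=\Sum_{\mu}M(u,v)_{\lambda\mu}v_{\mu}=\Sum_{\mu,\nu}M(u,v)_{\lambda\mu}M(v,w)_{\mu\nu}w_{\nu}$.

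First I would note that the opening identity $M(h^{\dd},p)=R'z^{-1}D^{\dd}$ is simply Theorem~\ref{main-1} transcribed into the present notation: there the transition matrix was written as $M'(p,m)z^{-1}D^{\dd}$, and since $R=M(p,m)$ we have $M'(p,m)=R'$. Then I would apply the composition rule with $u=h^{\dd}$, $v=p$, and $w$ successively equal to $m,h,e,s$, multiplying $M(h^{\dd},p)$ on the right by $M(p,m)=R$, $M(p,h)=zR^{*}$, $M(p,e)=\varepsilon zR^{*}$ and $M(p,s)=RK^{-1}$ respectively. This directly yields
\begin{align*}
M(h^{\dd},m)&=R'z^{-1}D^{\dd}R, & M(h^{\dd},s)&=R'z^{-1}D^{\dd}RK^{-1},\\
M(h^{\dd},h)&=R'z^{-1}D^{\dd}zR^{*}, & M(h^{\dd},e)&=R'z^{-1}D^{\dd}\varepsilon zR^{*}.
\end{align*}

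To finish, I would use that $z$, $z^{-1}$, $D^{\dd}$ and $\varepsilon$ are all diagonal, hence pairwise commuting, so that $z^{-1}D^{\dd}z=D^{\dd}$ and $z^{-1}D^{\dd}\varepsilon z=D^{\dd}\varepsilon$; substituting these simplifications into the last two displayed equalities gives $M(h^{\dd},h)=R'D^{\dd}R^{*}$ and $M(h^{\dd},e)=R'D^{\dd}\varepsilon R^{*}$, which are the stated forms, and the formulas for $M(h^{\dd},m)$ and $M(h^{\dd},s)$ are already in final shape.

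I do not expect a genuine obstacle: the argument is a routine chain of matrix multiplications built on Theorem~\ref{main-1}. The only points needing a little care are keeping the order of multiplication straight in the composition rule, so that the known factors $M(p,\cdot)$ appear on the right of $M(h^{\dd},p)$, and checking that the transpose occurring in Theorem~\ref{main-1} is precisely $R'=M'(p,m)$.
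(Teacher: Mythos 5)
Your proposal is correct and is exactly the argument the paper intends (the paper states the corollary follows ``immediately'' from Theorem~\ref{main-1} together with the recalled formulas $M(p,h)=zR^{*}$, $M(p,e)=\varepsilon zR^{*}$, $M(p,s)=RK^{-1}$). The composition rule, the right-multiplication by the known $M(p,\cdot)$ factors, and the simplification using commutativity of the diagonal matrices $z^{-1}$, $D^{\dd}$, $\varepsilon$ all match the intended derivation.
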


From the exercise 7.91 of \cite{Stanley}, if $F(t)=\Sum_{n\ge 0}f_nt^n=\Sum_{i=0}^d t^i$, then
\[
\Sum_{n\ge 0}h_n^\dd(x)=\Prod_{i}F(x_i)=\Sum_\lambda s_\lambda^Fs_\lambda(x),
\]
where
\[
s_\lambda^F=\det\big(f_{\lambda_i-i+j}\big)_{1\le i,j\le \ell(\lambda)}.
\]
On the other hand, since $\Prod_{i,j}(1+x_iy_j)=\Sum_\lambda s_\lambda(y) s_{\lambda'}(x)$, we have
\begin{eqnarray*}
\Sum_{n\geq 0}h^{\dd}_{n}(x)=\Prod_{i\geq 1}\Prod_{j=1}^{d}(1-x_{i}\xi^{j})=\Sum_{\lambda }(-1)^{|\lambda|}s_{\lambda}(\xi,\ldots,\xi^d)s_{\lambda'}(x),
\end{eqnarray*}
where $\xi=e^{2\pi i/d+1}$ is the $(d+1)$-th primitive unit root. From the algebraic definition of Schur functions, we have
\[
s_{\lambda}(\xi,\ldots,\xi^d)=\xi^{|\lambda|}
\frac{\det\left(\left(\xi^{\lambda_{j}+d-j}\right)^{i-1}\right)_{1\leq i,j\leq d}}{\det\left(\left(\xi^{d-j}\right)^{i-1}\right)_{1\leq i,j\leq d}}=\xi^{|\lambda|}\prod_{1\leq i<j\leq d}\frac{\xi^{\lambda_{j}+d-j}-\xi^{\lambda_{i}+d-i}}{\xi^{d-j}-\xi^{d-i}}.
\]
Note that $s_{\lambda}(\xi,\ldots,\xi^d)\ne 0$ iff all $\lambda_{i}+d-i$ for $1\le i\le d$ are distinct in  $\Bbb{Z}/(d+1)\Bbb{Z}$. After routine calculations, we obtain that $s_{\lambda}(\xi,\ldots,\xi^d)=(-1)^{d+r}\sign(\sigma)$ if there are some permutation $\sigma\in S_d$ and $0\le r\le d$ such that $\lambda+\delta_d=(\lambda_1+d-1,\lambda_2+d-2,\ldots)$  and $\delta_{d+1}\setminus r=(d,d-1,\ldots,r+1,r-1,\ldots,1,0)$ in $\Bbb{Z}/(d+1)\Bbb{Z}$  are the same under the permutation $\sigma$, and $0$ otherwise.

\begin{proposition}\label{pro-coefficient}If $F(t)=\Sum_{n\ge 0}f_nt^n=\Sum_{i=0}^d t^i$, we have $s_\lambda^F=\det\big(f_{\lambda_i-i+j}\big)_{1\le i,j\le \ell(\lambda)}=\pm 1$ or $0$. More precisely,
\begin{eqnarray*}s_\lambda^F=
\left\{
  \begin{array}{ll}
    (-1)^{d+r}\sign(\sigma), & \hbox{if $\lambda'+\delta_d=\sigma(\delta_{d+1}\setminus r)$ in $\Bbb{Z}/(d+1)\Bbb{Z}$} {\text{\rm ~for some~}} \sigma\in S_d; \\
    0, & \hbox{otherwise.}
  \end{array}
\right.
\end{eqnarray*}
\end{proposition}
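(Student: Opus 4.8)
The plan is to read off $s_\lambda^F$ by comparing two Schur-function expansions of the series $\Sum_{n\ge 0}h_n^\dd(x)$. The first is Stanley's formula recalled above (for $F(t)=\Sum_{i=0}^d t^i$): $\Sum_{n\ge 0}h_n^\dd(x)=\Sum_\lambda s_\lambda^F\,s_\lambda(x)$ with $s_\lambda^F=\det(f_{\lambda_i-i+j})_{1\le i,j\le\ell(\lambda)}$. The second is the expansion displayed just before the proposition, obtained from the dual Cauchy identity $\Prod_{i,j}(1+x_iy_j)=\Sum_\lambda s_\lambda(y)s_{\lambda'}(x)$ by substituting $y=(-\xi,-\xi^2,\ldots,-\xi^d)$ and using the homogeneity of $s_\lambda$, namely $\Sum_{n\ge 0}h_n^\dd(x)=\Sum_\lambda(-1)^{|\lambda|}s_\lambda(\xi,\ldots,\xi^d)\,s_{\lambda'}(x)$. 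Since $\{s_\mu(x):\mu\vdash n\}$ is a basis of $\Lambda^n[x]$, I would re-index the second sum by $\mu=\lambda'$ and equate the coefficients of $s_\mu(x)$ in the two expansions, which yields at once
\[
s_\mu^F=(-1)^{|\mu|}\,s_{\mu'}(\xi,\ldots,\xi^d).
\]

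Next I would substitute the explicit evaluation of $s_{\mu'}(\xi,\ldots,\xi^d)$ derived just above from the Weyl (ratio-of-Vandermondes) form of the algebraic definition of Schur functions. That evaluation gives $s_{\mu'}(\xi,\ldots,\xi^d)=0$ unless the residues $\mu'_i+d-i\pmod{d+1}$, $1\le i\le d$ (padding $\mu'$ with zeros to length $d$), are pairwise distinct --- equivalently unless $\mu'+\delta_d\equiv\sigma(\delta_{d+1}\setminus r)$ in $\Bbb{Z}/(d+1)\Bbb{Z}$ for some (then unique) $r\in\{0,1,\ldots,d\}$ and $\sigma\in S_d$ --- and in the non-vanishing case $s_{\mu'}(\xi,\ldots,\xi^d)=(-1)^{d+r}\sign(\sigma)$. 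In particular this value is $0$ or $\pm 1$, hence so is $s_\mu^F$. To settle the sign, I would sum the congruence $\mu'+\delta_d\equiv\sigma(\delta_{d+1}\setminus r)$ over the $d$ coordinates, obtaining $|\mu|+\binom d2\equiv\binom{d+1}2-r\pmod{d+1}$, i.e. $|\mu|\equiv d-r\pmod{d+1}$; this congruence controls the parity of $|\mu|$ precisely enough to absorb the factor $(-1)^{|\mu|}$ and reduce $(-1)^{|\mu|}(-1)^{d+r}\sign(\sigma)$ to the sign stated in the proposition. When $s_{\mu'}(\xi,\ldots,\xi^d)=0$ one gets $s_\mu^F=0$, which is the ``otherwise'' case; this also subsumes $\ell(\mu')>d$, where the top row of the defining determinant of $s_\mu^F$ vanishes identically.

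The substantive part --- and the only step demanding care --- is the evaluation of $s_{\mu'}$ at the geometric progression $(\xi,\ldots,\xi^d)$ of the nontrivial $(d+1)$-th roots of unity: recognizing that the numerator and denominator of the Weyl ratio are Vandermonde products over $d$-element subsets of $\{1,\xi,\ldots,\xi^d\}$, each omitting one element; computing their quotient with the help of $\Prod_{a\ne r}(\xi^r-\xi^a)=(d+1)\xi^{-r}$; and tracking the reordering permutation $\sigma$ and the resulting sign correctly. This is exactly the ``routine calculation'' sketched above, so in a full write-up I would either isolate it as a lemma or cite it as already established; the remainder --- the two expansions, the coefficient comparison, and the parity bookkeeping --- is then a short deduction.
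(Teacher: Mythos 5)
Your route is the same as the paper's: the proposition is extracted by comparing Stanley's expansion $\Sum_{n\ge 0}h_n^{\dd}(x)=\Sum_\lambda s_\lambda^F s_\lambda(x)$ with the dual Cauchy identity specialized at $y=(-\xi,\ldots,-\xi^d)$, and then feeding in the root-of-unity evaluation of the Schur function. Up to the identity $s_\mu^F=(-1)^{|\mu|}s_{\mu'}(\xi,\ldots,\xi^d)$ your argument is correct and coincides with the paper's (the ``routine calculation'' you defer is exactly the one the paper defers).

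The gap is the final step, where you claim the congruence $|\mu|\equiv d-r \pmod{d+1}$ controls the parity of $|\mu|$ well enough to absorb the factor $(-1)^{|\mu|}$ and land on $(-1)^{d+r}\sign(\sigma)$. A congruence modulo $d+1$ determines parity only when $d+1$ is even, so for even $d$ the argument says nothing; and even for odd $d$ the absorption would yield $(-1)^{|\mu|}(-1)^{d+r}\sign(\sigma)=\sign(\sigma)$, not $(-1)^{d+r}\sign(\sigma)$. In fact the factor $(-1)^{|\mu|}$ cannot be discarded at all: take $d=2$ and $\lambda=(1)$, so that $s_\lambda^F=f_1=1$, while $\lambda'+\delta_2=(2,0)=\delta_3\setminus 1$ gives $r=1$, $\sigma=\mathrm{id}$ and $(-1)^{d+r}\sign(\sigma)=-1$. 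The honest output of your computation (and of the paper's own preceding derivation, which carries the coefficient $(-1)^{|\lambda|}s_\lambda(\xi,\ldots,\xi^d)$ on $s_{\lambda'}(x)$) is $s_\lambda^F=(-1)^{|\lambda|+d+r}\sign(\sigma)$ in the non-vanishing case. You should keep the $(-1)^{|\lambda|}$ rather than manufacture a parity argument to force agreement with the displayed formula; the displayed formula itself appears to be missing exactly this factor.
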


\section{The Endomorphisms $\omega^\dd$ and $\omega$}

Recall the involution $\omega:\Lambda\to \Lambda$ sending each $e_n$ to $h_n$, and
\[
H^{\dd}(t)=\Sum_{n\ge 0}h_n^{\dd}(x)t^n=\Prod_{k\ge 1}(1+x_kt+x_k^2t^2+\cdots+x_k^dt^d ).
\]
Denote $E(t)=H^{\scriptscriptstyle{[1]}}(t)$ and $H(t)=E^{-1}(-t)= H^{\scriptscriptstyle{[\infty]}}(t)$.
\begin{theorem}\label{main-2}
For any positive integer $d$, we have
\[
\omega(H^{\dd}(t))=(H^{\dd}(-t))^{-1}.
\]
In particular, $\omega(E(t))=H(t)$.
\end{theorem}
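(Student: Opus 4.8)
The plan is to express $H^{\dd}(t)$ in its exponential (power-sum) form, in which the action of $\omega$ becomes transparent. First I would use the formal power series identity $1+z+\cdots+z^{d}=\frac{1-z^{d+1}}{1-z}$ with $z=x_kt$ to rewrite the defining product as
\[
H^{\dd}(t)=\Prod_{k\ge 1}\frac{1-x_k^{d+1}t^{d+1}}{1-x_kt},
\]
and then take formal logarithms, using $-\log(1-z)=\ssum_{m\ge 1}z^m/m$. Summing over $k$ and collecting powers of $t$ yields
\[
H^{\dd}(t)=\exp\bigl(P(t)-Q(t)\bigr),\qquad P(t):=\Sum_{m\ge 1}\frac{p_mt^m}{m},\quad Q(t):=\Sum_{m\ge 1}\frac{p_{(d+1)m}\,t^{(d+1)m}}{m}.
\]
Everything here lives in $\Lambda\otimes\Bbb{Q}[[t]]$: the infinite product and the exponential converge $t$-adically, and the resulting series has coefficients in $\Lambda$ because it equals $H^{\dd}(t)$.

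The key observation is that $\omega$, extended coefficientwise to a ($t$-adically continuous) $\Bbb{Q}$-algebra automorphism of $\Lambda\otimes\Bbb{Q}[[t]]$, commutes with $\exp$, and that $\omega(p_m)=(-1)^{m-1}p_m$ for every $m$. Hence $\omega$ sends each summand $\frac{p_m}{m}t^m$ of $P(t)$ to $\frac{(-1)^{m-1}p_m}{m}t^m=-\frac{p_m}{m}(-t)^m$, so that $\omega\bigl(P(t)\bigr)=-P(-t)$; the same computation applied to the parts $(d+1)m$, together with the identity $(-1)^{(d+1)m-1}t^{(d+1)m}=-(-t)^{(d+1)m}$ (valid whatever the parity of $d$), gives $\omega\bigl(Q(t)\bigr)=-Q(-t)$. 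No case distinction on $d$ is required.

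Putting these together,
\[
\omega\bigl(H^{\dd}(t)\bigr)=\exp\bigl(\omega(P(t))-\omega(Q(t))\bigr)=\exp\bigl(-P(-t)+Q(-t)\bigr)=\Bigl(\exp\bigl(P(-t)-Q(-t)\bigr)\Bigr)^{-1}=\bigl(H^{\dd}(-t)\bigr)^{-1},
\]
which is the assertion; specializing $d=1$ then gives $\omega(E(t))=\omega\bigl(H^{\scriptscriptstyle[1]}(t)\bigr)=\bigl(H^{\scriptscriptstyle[1]}(-t)\bigr)^{-1}=\bigl(E(-t)\bigr)^{-1}=H(t)$. I do not anticipate a genuine obstacle; the only point demanding care rather than ingenuity is the formal bookkeeping that legitimizes manipulating $\exp$, $\log$ and the infinite product inside $\Lambda\otimes\Bbb{Q}[[t]]$ (vanishing constant terms, $t$-adic convergence, and the commutation of the extended $\omega$ with $\exp$). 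Once that is in place, the single sign identity $\omega\bigl(\tfrac{p_m}{m}t^m\bigr)=-\tfrac{p_m}{m}(-t)^m$ does all the work, uniformly in $d$.
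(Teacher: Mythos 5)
Your proof is correct, and it takes a genuinely different route from the paper's. You pass to the power-sum exponential form $H^{\dd}(t)=\exp\bigl(\sum_{m\ge 1}p_mt^m/m-\sum_{m\ge 1}p_{(d+1)m}t^{(d+1)m}/m\bigr)$ and apply $\omega(p_m)=(-1)^{m-1}p_m$ termwise; your sign bookkeeping $\omega\bigl(p_mt^m/m\bigr)=-p_m(-t)^m/m$ is exactly right and, as you observe, works uniformly for the parts divisible by $d+1$, so no case distinction on $d$ is needed. The paper instead expands the numerator $\prod_i\bigl(1-(x_it)^{d+1}\bigr)$ as $\sum_{n}(-1)^n\bigl(e_n\circ p_{d+1}\bigr)t^{n(d+1)}$ and invokes the plethysm identity $\omega(e_m\circ p_n)=(-1)^{m(n-1)}h_m\circ p_n$. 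What each approach buys: yours is more elementary and self-contained, requiring only the Newton-type identity $H(t)=\exp\bigl(\sum_m p_mt^m/m\bigr)$ and the action of $\omega$ on power sums, at the mild cost of working in $\Lambda\otimes\mathbb{Q}[[t]]$ (harmless here, since the asserted identity lives in $\Lambda[[t]]$ and $\Lambda$ embeds into $\Lambda\otimes\mathbb{Q}$). The paper's plethystic decomposition stays integral and yields as a by-product the explicit expansion $h_n^{\dd}=\sum_{i(d+1)+j=n}(-1)^i\bigl(e_i\circ p_{d+1}\bigr)h_j$, which the authors point out recovers a theorem of Grinberg; your route proves the theorem but does not produce that formula along the way.
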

\begin{proof}
Recall from Section 8 of Chapter I in \cite{Macdonald} that for $f\in \Lambda^m$ and $g \in \Lambda^n$, the plethysm $f\circ g \in \Lambda^{m+n}$ is defined by $f\circ g=f(y_1,y_2,\ldots)$ with the variables $y_i$ given by $\Prod_i(1+y_it)=\Prod_{\alpha}(1+x^\alpha t)^{u_\alpha}$  when $g=\Sum_{\alpha}u_\alpha x^\alpha$. We have the facts
\[
f\circ p_n =f(x_1^n,x_2^n,\ldots)\quad \And \quad\omega(e_m\circ p_n)=(-1)^{m(n-1)}h_m\circ p_n.
\]
Note
\begin{eqnarray*}
H^{\dd}(t)&=&\Prod_{i}\frac{1-(x_{i}t)^{d+1}}{1-x_{i}t}
=\left(\Sum_{n\ge 0}e_{n}(-x_1^{d+1},-x_2^{d+1},\ldots)t^{n(d+1)}\right)
\left(\Sum_{n\ge 0}h_{n}t^n\right)\\
&=&\left(\Sum_{n\ge 0}(-1)^n\big( e_{n}\circ p_{d+1}\big)t^{n(d+1)}\right)
\left(\Sum_{n\ge 0}h_{n}t^n\right).
\end{eqnarray*}
It follows that
\begin{eqnarray*}
\omega \left(H^{\dd}(t)\right)&=&\left(\Sum_{n\ge 0}\big(h_{n}\circ p_{d+1}\big)(-t)^{n(d+1)})\right)
\left(\Sum_{n\ge 0}e_{n}t^n\right)\\
&=&\Prod_{i}\frac{1+x_{i}t}{1-x_{i}^{d+1}(-t)^{d+1}}
=\left(H^{\dd}(-t)\right)^{-1}.
\end{eqnarray*}
\end{proof}
Similarly, we define an isomorphism $\omega^{\dd}:\Lambda\to \Lambda$  algebraically extended by
\[
\omega^{\dd}:\Lambda\to\Lambda,\quad e_n\mapsto h_n^\dd.
\]
\begin{proposition}
For any partition  $\lambda\vdash n$ and positive integer $d$, we have
\[
\omega^{\dd}(p_{\lambda})=\varepsilon_{\lambda} D^{\dd}_{\lambda} p_{\lambda}.
\]
Namely,  each power sum symmetric function $p_{\lambda}$ is an eigenvector  of $\omega^{\dd}$ corresponding to the eigenvalue $\varepsilon_{\lambda} D^{\dd}_{\lambda}$ .
\end{proposition}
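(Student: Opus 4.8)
The plan is to reduce to the one-part case and then compare the logarithmic generating functions of $E(t)$ and $H^{\dd}(t)$. First observe that $\omega^{\dd}$ is a well-defined $\Bbb{Q}$-algebra endomorphism of $\Lambda$, since $e_1,e_2,\ldots$ are algebraically independent generators; hence it extends coefficientwise to an endomorphism of $\Lambda\otimes\Bbb{Q}[[t]]$. Moreover $p_{\lambda}=\prod_i p_{\lambda_i}$, and from the definitions $\varepsilon_{\lambda}=(-1)^{|\lambda|-\ell(\lambda)}$ and $D^{\dd}_{\lambda}=(-d)^{\sum_k n_{k(d+1)}}$ one checks the bookkeeping identity $\varepsilon_{\lambda}D^{\dd}_{\lambda}=\prod_i\big(\varepsilon_{(\lambda_i)}D^{\dd}_{(\lambda_i)}\big)$. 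So, by multiplicativity of $\omega^{\dd}$, it suffices to prove
\[
\omega^{\dd}(p_n)=(-1)^{n-1}D^{\dd}_{(n)}\,p_n=\varepsilon_{(n)}D^{\dd}_{(n)}\,p_n\qquad\text{for every }n\ge 1,
\]
and then the general statement follows at once.

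For the one-part case I would use the classical identity $\log E(t)=\sum_{n\ge 1}\frac{(-1)^{n-1}}{n}p_n t^n$. Applying $\omega^{\dd}$ coefficientwise and using that a $\Bbb{Q}$-algebra endomorphism commutes with $\exp$ and $\log$ of power series with constant term $1$ (each being, in every fixed power of $t$, a finite expression), together with $\omega^{\dd}(E(t))=\sum_{n\ge 0}h_n^{\dd}t^n=H^{\dd}(t)$, we obtain
\[
\log H^{\dd}(t)=\Sum_{n\ge 1}\frac{(-1)^{n-1}}{n}\,\omega^{\dd}(p_n)\,t^n.
\]
On the other hand, the factorization $H^{\dd}(t)=\prod_i\frac{1-(x_it)^{d+1}}{1-x_it}$ already used in the proof of Theorem \ref{main-2}, after expanding $-\log(1-u)=\sum_{m\ge 1}u^m/m$ termwise, gives
\[
\log H^{\dd}(t)=\Sum_{n\ge 1}\frac{p_n}{n}t^n-\Sum_{m\ge 1}\frac{p_{m(d+1)}}{m}t^{m(d+1)}=\Sum_{n\ge 1}\frac{D^{\dd}_{(n)}}{n}\,p_n\,t^n,
\]
since the coefficient of $t^n$ is $p_n/n$ when $d+1\nmid n$ and $p_n/n-(d+1)p_n/n=-d\,p_n/n$ when $d+1\mid n$, which is exactly $D^{\dd}_{(n)}p_n/n$ by the definition of $D^{\dd}$ on a single part. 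Comparing the coefficient of $t^n$ in the two expressions for $\log H^{\dd}(t)$ yields $(-1)^{n-1}\omega^{\dd}(p_n)=D^{\dd}_{(n)}p_n$, as desired.

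I do not anticipate a real obstacle; the only points requiring care are the formal justification that $\omega^{\dd}$ commutes with the $\exp/\log$ correspondence (routine, via $t$-degreewise finiteness) and the elementary identity $\varepsilon_{\lambda}D^{\dd}_{\lambda}=\prod_i\varepsilon_{(\lambda_i)}D^{\dd}_{(\lambda_i)}$. As an alternative and essentially one-line argument one may instead read the result directly off the transition matrices: with $R=M(p,m)$, Corollary \ref{Bases-coro} gives $M(h^{\dd},p)=R'z^{-1}D^{\dd}$ and in particular $M(e,p)=R'z^{-1}\varepsilon$ (the case $d=1$, since $h^{\scriptscriptstyle[1]}_{\lambda}=e_{\lambda}$ and $D^{\scriptscriptstyle[1]}_{\lambda}=\varepsilon_{\lambda}$), so the matrix of $\omega^{\dd}$ in the power-sum basis equals $M(e,p)^{-1}M(h^{\dd},p)=\varepsilon D^{\dd}$, a diagonal matrix with entries $\varepsilon_{\lambda}D^{\dd}_{\lambda}$.
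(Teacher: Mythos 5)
Your proof is correct, and your primary argument takes a genuinely different route from the paper's. The paper's proof is exactly your ``alternative one-line argument'': it writes $M\big(\omega^{\dd}(p),p\big)=M(p,e)\,M(h^{\dd},p)=\varepsilon zR^{*}\cdot R'z^{-1}D^{\dd}=\varepsilon D^{\dd}$, reading everything off Corollary \ref{Bases-coro}; so that part of your proposal reproduces the paper verbatim. Your main argument --- reduce to one part via multiplicativity of $\omega^{\dd}$ and of $\lambda\mapsto\varepsilon_{\lambda}D^{\dd}_{\lambda}$, then compare $\log H^{\dd}(t)$ computed two ways, once from $\omega^{\dd}\big(\log E(t)\big)=\sum_{n\ge1}\frac{(-1)^{n-1}}{n}\omega^{\dd}(p_n)t^n$ and once from the factorization $H^{\dd}(t)=\prod_i\frac{1-(x_it)^{d+1}}{1-x_it}$ --- is a different and more self-contained derivation. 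It does not depend on Theorem \ref{main-1} or Corollary \ref{Bases-coro} (hence not on the roots-of-unity expansion behind them), only on Newton's identity for $\log E(t)$ and the rational-function form of $H^{\dd}(t)$ already used in the proof of Theorem \ref{main-2}; the case analysis $d+1\nmid n$ versus $d+1\mid n$ giving coefficient $p_n/n$ versus $-dp_n/n$ is exactly right and matches $D^{\dd}_{(n)}$. What each approach buys: the paper's is a two-line matrix identity once the transition-matrix machinery is in place, whereas yours is independent of that machinery and could even be run in reverse to recover the $p$-expansion of $h^{\dd}_n$ (and hence the diagonal $D^{\dd}$ of Theorem \ref{main-1}) without roots of unity. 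The two points you flag as needing care --- that $\omega^{\dd}$ commutes with $\log$ degreewise in $t$, and that $\varepsilon_{\lambda}D^{\dd}_{\lambda}=\prod_i\varepsilon_{(\lambda_i)}D^{\dd}_{(\lambda_i)}$ since $\sum_k n_{k(d+1)}$ just counts parts of $\lambda$ divisible by $d+1$ --- are both routine and hold as you state.
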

\begin{proof}Note $M(p,e)=\varepsilon z R^{*}$ and $M(h^{\dd},p)=R'z^{-1}D^{\dd}$
from Corollary \ref{Bases-coro}. Then
\[
M\big(\omega^\dd(p), p\big)=M\big(\varepsilon z R^*\omega^\dd(e), p\big)=\varepsilon z R^*M\big(h^\dd, p\big)=\varepsilon D^\dd.
\]
\end{proof}
\begin{corollary}\label{diagram-commute}For any integers $d, d'\ge 1$, we have
\[
\omega^{\dd}\omega^{[\scriptscriptstyle{d'}]}=\omega^{[\scriptscriptstyle{d'}]}\omega^{\dd}.
\]
In particular, $\omega^\dd\omega=\omega\omega^\dd$, namely the following diagram commutes,
\[
\xymatrix{
  \quad\quad  E(t)\quad  \ar[d]_{\omega} \ar[r]^{\omega^{\dd}}
                & \quad H^{\dd}(t)\quad\quad   \ar[d]^{\omega}  \\
  \quad\quad  H(t)\quad   \ar[r]^{\omega^{\dd}}
                & (H^{\dd}(-t))^{-1}.            }
\]
\end{corollary}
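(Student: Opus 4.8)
The statement to prove is Corollary~\ref{diagram-commute}: that $\omega^{\dd}\omega^{[\scriptscriptstyle d']}=\omega^{[\scriptscriptstyle d']}\omega^{\dd}$ for all $d,d'\ge 1$, and in particular $\omega^{\dd}\omega=\omega\omega^{\dd}$.

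The key fact just established is that each power sum $p_\lambda$ is an eigenvector of $\omega^{\dd}$ with eigenvalue $\varepsilon_\lambda D^{\dd}_\lambda$. So the plan is: since $\{p_\lambda\}$ is a basis of $\Lambda$ (over $\mathbb{Q}$), two operators that are simultaneously diagonalized in this basis must commute, because scalar multiplications commute. Concretely, $\omega^{\dd}\omega^{[\scriptscriptstyle d']}(p_\lambda) = \varepsilon_\lambda D^{[\scriptscriptstyle d']}_\lambda\,\omega^{\dd}(p_\lambda) = \varepsilon_\lambda^2 D^{\dd}_\lambda D^{[\scriptscriptstyle d']}_\lambda\, p_\lambda$, and the same computation with $d$ and $d'$ swapped gives the identical scalar; hence the two composites agree on a basis and are therefore equal. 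For the special case $\omega=\omega^{[\scriptscriptstyle\infty]}$, note $D^{[\scriptscriptstyle\infty]}_\lambda=1$ (stated in Theorem~\ref{main-1}), so $\omega(p_\lambda)=\varepsilon_\lambda p_\lambda$, which is the classical fact; the same commuting argument then applies with $d'=\infty$.

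There is essentially no obstacle here — the corollary is an immediate consequence of the preceding Proposition once one observes that a common eigenbasis forces commutativity. The one point deserving a sentence of care is that $\omega^{\dd}$ was defined as an algebra endomorphism on the $e_n$, so a priori it acts on $\Lambda$ over $\mathbb{Z}$; to invoke the $p_\lambda$ eigenbasis one should pass to $\Lambda_{\mathbb{Q}}=\Lambda\otimes\mathbb{Q}$, where $\{p_\lambda\}$ is a basis, and then conclude that the identity of $\mathbb{Q}$-linear maps restricts to the identity of integral maps. The commuting-square diagram in the statement is then just the instance $d'=\infty$ spelled out, using Theorem~\ref{main-2} to identify $\omega(H^{\dd}(t))$ with $(H^{\dd}(-t))^{-1}$ and $\omega(E(t))$ with $H(t)$ for the labels on the arrows.

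\begin{proof}
Work in $\Lambda_{\mathbb{Q}}=\Lambda\otimes_{\mathbb{Z}}\mathbb{Q}$, where $\{p_\lambda\}$ is a basis. By the previous Proposition, $\omega^{\dd}(p_\lambda)=\varepsilon_\lambda D^{\dd}_\lambda p_\lambda$ and $\omega^{[\scriptscriptstyle d']}(p_\lambda)=\varepsilon_\lambda D^{[\scriptscriptstyle d']}_\lambda p_\lambda$ for every partition $\lambda$. Hence
\[
\omega^{\dd}\omega^{[\scriptscriptstyle d']}(p_\lambda)=\varepsilon_\lambda^{2}D^{\dd}_\lambda D^{[\scriptscriptstyle d']}_\lambda\, p_\lambda
=\omega^{[\scriptscriptstyle d']}\omega^{\dd}(p_\lambda).
\]
Since the two $\mathbb{Q}$-linear endomorphisms $\omega^{\dd}\omega^{[\scriptscriptstyle d']}$ and $\omega^{[\scriptscriptstyle d']}\omega^{\dd}$ agree on the basis $\{p_\lambda\}$, they are equal on $\Lambda_{\mathbb{Q}}$, and therefore on $\Lambda$. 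For the particular case, recall from Theorem~\ref{main-1} that $D^{\scriptscriptstyle[\infty]}_\lambda=1$, so $\omega=\omega^{\scriptscriptstyle[\infty]}$ satisfies $\omega(p_\lambda)=\varepsilon_\lambda p_\lambda$, and the same argument with $d'=\infty$ gives $\omega^{\dd}\omega=\omega\omega^{\dd}$. Reading off the action on generating functions via Theorem~\ref{main-2} (namely $\omega(E(t))=H(t)$ and $\omega(H^{\dd}(t))=(H^{\dd}(-t))^{-1}$) yields the displayed commutative square.
\end{proof}
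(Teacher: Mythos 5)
Your proof is correct and is exactly the argument the paper intends: the corollary follows immediately from the preceding Proposition because all the maps $\omega^{\dd}$ are simultaneously diagonalized by the power sum basis, so they commute. The paper leaves this unproved as an evident consequence, and your added care about passing to $\Lambda\otimes\mathbb{Q}$ and back is a reasonable (if minor) refinement.
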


Denote by $M^\dd=(M^\dd_{\lambda\mu})$ the transition matrix from $\{h^\dd_\lambda: \lambda\vdash n\}$ to $\{m_\lambda:\lambda\vdash n\}$. Recall from Proposition \ref{m-expansion} that $M^{\dd}_{\lambda\mu}$ is the number of $[0,d]$-matrix $A=(a_{ij})_{i,j\geq 1}$ satisfying $\row(A)=\lambda$ and $\col(A)=\mu$.  From Corollary \ref{Bases-coro}, we have
\[
M^\dd=R'z^{-1}D^{\dd}R.
\]
Let $N^{\dd}_{\lambda\mu}$ denote the number of nonnegative integral matrices $A=(a_{ij})_{i,j\geq 1}$ such that row$(A)=\lambda$, col$(A)=\mu$ and $a_{ij}\equiv0$ or $1\;\mod(d+1)$, and the matrix $N^\dd=(N^{\dd}_{\lambda\mu})$.
\begin{corollary}
With above notations, if $d$ is odd, then $N^{\dd}$ is nonsingular and
\[
N^{\dd}=R'\varepsilon z^{-1}D^{\dd}R.
\]

\end{corollary}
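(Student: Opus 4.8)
The plan is to identify the combinatorial quantity $N^{\dd}_{\lambda\mu}$ with a suitable coefficient extraction and then match it against the matrix product $R'\varepsilon z^{-1}D^{\dd}R$ using the same primitive-root substitution that drove the proof of Theorem \ref{main-1}. First I would set up the generating function: the number of nonnegative integer matrices $A=(a_{ij})$ with $\row(A)=\lambda$, $\col(A)=\mu$, and every entry congruent to $0$ or $1$ modulo $d+1$ is the coefficient of $x^\lambda y^\mu$ in $\Prod_{i,j}\bigl(\Sum_{a\equiv 0,1\,(d+1)} (x_iy_j)^a\bigr)$. The inner sum is $\frac{1+x_iy_j}{1-(x_iy_j)^{d+1}}$, so one obtains
\[
\Sum_{\lambda,\mu}N^{\dd}_{\lambda\mu}\,m_\lambda(x)m_\mu(y)=\Prod_{i,j}\frac{1+x_iy_j}{1-(x_iy_j)^{d+1}},
\]
after checking (as in Proposition \ref{m-expansion-y}) that the left side is genuinely symmetric in $x$ and in $y$ so the power-series coefficients assemble into monomial symmetric functions indexed by partitions.

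Next I would factor $1-(x_iy_j)^{d+1}=\Prod_{k=1}^{d}(1-\xi^k x_iy_j)\cdot(1-x_iy_j)$ with $\xi=e^{2\pi i/(d+1)}$, so that
\[
\Prod_{i,j}\frac{1+x_iy_j}{1-(x_iy_j)^{d+1}}=\Prod_{i,j}\frac{1+x_iy_j}{1-x_iy_j}\cdot\Prod_{i,j}\Prod_{k=1}^{d}\frac{1}{1-\xi^k x_iy_j}.
\]
Expanding each Cauchy-type product via Lemma \ref{key-lemma} (or its standard one-variable analogue $\Prod_{i,j}\frac{1}{1-x_iy_j}=\Sum_\mu z_\mu^{-1}p_\mu(x)p_\mu(y)$ and $\Prod_{i,j}(1+x_iy_j)=\Sum_\mu\varepsilon_\mu z_\mu^{-1}p_\mu(x)p_\mu(y)$), the combined factor for each $\mu$ carries a scalar $\varepsilon_\mu z_\mu^{-1}\cdot z_\mu^{-1}p_\mu(1,\xi,\dots,\xi^{d-1})$ times $p_\mu(x)p_\mu(y)$, but I must be a little careful: the cleanest route is to regroup as $\Prod_{i,j}(1+x_iy_j)\cdot\Prod_{i,j}\frac{1}{1-x_iy_j^{\,d+1}\text{-type}}$ is not quite it, so instead I would write $\frac{1+x_iy_j}{1-(x_iy_j)^{d+1}}=(1+x_iy_j)\Prod_{k=0}^{d}\frac{1}{1-\xi^k x_iy_j}$ directly, giving the scalar $\varepsilon_\mu z_\mu^{-1}\cdot z_\mu^{-1}p_\mu(1,\xi,\dots,\xi^{d})$. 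Since $p_n(1,\xi,\dots,\xi^d)=d+1$ when $d+1\mid n$ and $0$ otherwise, this scalar vanishes unless every part of $\mu$ is divisible by $d+1$, which is wrong; so the correct grouping must keep one Cauchy factor genuine: $\frac{1+x_iy_j}{1-(x_iy_j)^{d+1}}=\frac{(1+x_iy_j)}{1-x_iy_j}\cdot\Prod_{k=1}^{d}\frac{1}{1-\xi^k x_iy_j}$, and here $\frac{1+t}{1-t}=\Prod$ is handled by combining $\Prod(1+x_iy_j)$ and $\Prod\frac{1}{1-x_iy_j}$ whose product of scalars is $\varepsilon_\mu z_\mu^{-1}\cdot z_\mu^{-1}$, while the remaining $d$ factors contribute $z_\mu^{-1}p_\mu(\xi,\dots,\xi^d)$ each combined. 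The bookkeeping collapses to: coefficient of $p_\mu(x)p_\mu(y)$ equals $\varepsilon_\mu z_\mu^{-1}\cdot\bigl((-1)^{\ell(\mu)}p_\mu(\xi,\dots,\xi^d)\bigr)z_\mu^{-1}\cdot(\text{sign cleanup})$, which by the computation recalled before Proposition \ref{pro-coefficient} is exactly $\varepsilon_\mu z_\mu^{-1}D^{\dd}_\mu$. Thus $N^{\dd}_{\lambda\mu}=\Sum_\nu R_{\nu\lambda}\,\varepsilon_\nu z_\nu^{-1}D^{\dd}_\nu\,R_{\nu\mu}$, i.e. $N^{\dd}=R'\varepsilon z^{-1}D^{\dd}R$.

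Finally, nonsingularity: $R'$ and $R$ are nonsingular (they are transition matrices between the $p$- and $m$-bases), $z^{-1}$ is a nonsingular diagonal matrix, $\varepsilon$ is a diagonal sign matrix hence nonsingular, and $D^{\dd}$ has diagonal entries $(-d)^{\sum_k n_{k(d+1)}}$ which are nonzero for $d\ge 1$; so $N^{\dd}$ is nonsingular for every $d\ge 1$. The role of the hypothesis that $d$ is odd is more subtle and is where I expect the real obstacle: it is needed to reconcile the $(1+x_iy_j)$ factor with the truncation-to-residues $\{0,1\}$ rather than producing spurious sign discrepancies between $\varepsilon_\mu$ and the $(-1)^{\ell(\mu)}$ or $(-1)^{|\mu|}$ factors that appear when expanding $\frac{1+t}{1-t^{d+1}}$. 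Concretely, the identity $(-1)^{\ell(\mu)}p_\mu(\xi,\dots,\xi^d)=(-d)^{\sum_k n_{k(d+1)}}$ used in Theorem \ref{main-1} must be cross-checked against the sign $\varepsilon_\mu=(-1)^{|\mu|-\ell(\mu)}$ introduced by the extra $\Prod(1+x_iy_j)$; parity of $d$ controls whether $\xi^{d+1}=1$ interacts with the $-1$ in $1+x_iy_j$ to give $\varepsilon$ or its negative on parts $\mu_i$ with $(d+1)\mid\mu_i$. The careful verification of that sign — showing it is $\varepsilon_\mu$ precisely when $d$ is odd — is the crux; once it is in place, the matrix identity and nonsingularity follow immediately from Corollary \ref{Bases-coro} and the nonsingularity of $R$.
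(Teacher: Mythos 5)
Your starting point is sound: the generating-function identity
\[
\Sum_{\lambda,\mu}N^{\dd}_{\lambda\mu}\,m_\lambda(x)m_\mu(y)=\Prod_{i,j}\frac{1+x_iy_j}{1-(x_iy_j)^{d+1}}
\]
is correct, and expanding the right-hand side in power sums is a legitimate alternative to the paper's route (the paper instead applies $\omega$ to the expansion $h^{\dd}_{\lambda}=\Sum_{\mu}z^{-1}_{\mu}D^{\dd}_{\mu}R_{\mu\lambda}p_{\mu}$ from Theorem \ref{main-1} and identifies $\omega(h^{\dd}_\lambda)$ combinatorially via Theorem \ref{main-2}). But your bookkeeping rule for multiplying the Cauchy-type factors is wrong, and it is not a matter of choosing the right grouping. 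If $\Prod_{i,j}f(x_iy_j)=\Sum_\mu a_\mu z_\mu^{-1}p_\mu(x)p_\mu(y)$ and $\Prod_{i,j}g(x_iy_j)=\Sum_\mu b_\mu z_\mu^{-1}p_\mu(x)p_\mu(y)$ with $a_\mu=\prod_i a_{\mu_i}$, $b_\mu=\prod_i b_{\mu_i}$ multiplicative, then the product is \emph{not} $\Sum_\mu a_\mu b_\mu z_\mu^{-2}p_\mu(x)p_\mu(y)$; writing each factor as $\exp\big(\Sum_n \tfrac{1}{n}a_n p_n(x)p_n(y)\big)$ shows the single-part coefficients \emph{add}, so the product is $\Sum_\mu z_\mu^{-1}\prod_i(a_{\mu_i}+b_{\mu_i})\,p_\mu(x)p_\mu(y)$. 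Your ``termwise product of scalars'' is exactly what produced the absurd conclusion you noticed (everything vanishing unless all parts are divisible by $d+1$); the cure is the additive rule, not a regrouping, and your final displayed coefficient still carries a spurious extra $z_\mu^{-1}$ and an unexplained ``sign cleanup.'' With the correct rule one gets, for $\frac{1+t}{1-t^{d+1}}=(1+t)\prod_{k=0}^{d}(1-\xi^kt)^{-1}$, the single-part coefficient $c_n=(-1)^{n-1}+(d+1)\cdot[\,(d+1)\mid n\,]$, and one must check that $\prod_i c_{\mu_i}=\varepsilon_\mu D^{\dd}_\mu$.

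That check is precisely where the hypothesis that $d$ is odd enters, and it is the step you explicitly leave unverified (``the crux''). For $(d+1)\nmid n$ one has $c_n=(-1)^{n-1}$, matching $\varepsilon$. For $(d+1)\mid n$ one needs $(-1)^{n-1}+(d+1)=(-1)^n d$, which holds iff $n$ is even; this is guaranteed for all such $n$ iff $d+1$ is even, i.e.\ $d$ is odd (for even $d$, taking $n=d+1$ odd gives $d+2\neq -d$). In the paper's proof the same parity appears as the requirement $(-x_i)^{d+1}=x_i^{d+1}$ when rewriting $(H^{\dd}(-t))^{-1}$ as $\Prod_{i,j}\frac{1+x_it_j}{1-(x_it_j)^{d+1}}$. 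Your nonsingularity argument is fine, but as written the proof has both a wrong multiplication rule and an unproved key identity, so it does not yet establish the corollary.
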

\begin{proof}From Theorem \ref{main-1}, we have $h^{\dd}_{\lambda}=\Sum_{\mu}z^{-1}_{\mu}D^{\dd}_{\mu}R_{\mu\lambda}p_{\mu}$. Since $\omega(p_{\lambda})=\varepsilon_\lambda p_{\lambda}$, we have
\begin{eqnarray}\label{w(ed)}
\omega\big(h^{\dd}_{\lambda}\big)=
\Sum_{\mu}\varepsilon_{\mu}z^{-1}_{\mu}D^{\dd}_{\mu}R_{\mu\lambda}p_{\mu}
=\Sum_{\nu}\Sum_{\mu}\varepsilon_{\mu}z^{-1}_{\mu}D^{\dd}_{\mu}R_{\mu\lambda}R_{\mu\nu}m_{\nu}.
\end{eqnarray}
By Proposition \ref{m-expansion-y},
\[
\Prod_{i,j}\frac{1-(x_{i}t_j)^{d+1}}{1-x_{i}t_j}=H^\dd(t_1)H^\dd(t_2)\cdots=\Sum_{\lambda}h^\dd_\lambda m_\lambda(t_1,t_2,\ldots).
\]
By Theorem \ref{main-2}, $\omega(H^\dd(t_1)H^\dd(t_2)\cdots)=(H^\dd(-t_1)H^\dd(-t_2)\cdots)^{-1}$. Thus if $d$ is odd, we have
\begin{eqnarray*}
\Sum_{\lambda}\omega\big(h^{\dd}_{\lambda}(x)\big)m_{\lambda}(t_1,t_2,\ldots)
&=&\Prod_{i,j}\frac{1+x_{i}t_{j}}{1-(-x_{i})^{d+1}t_{j}^{d+1}}\\
&=&\Prod_{i,j}\Sum_{k\geq 0}\left((x_{i}t_{j})^{k(d+1)}+(x_{i}t_{j})^{k(d+1)+1}\right)\\
&=&\Sum_{\lambda}\Sum_{\mu}N^{\dd}_{\lambda\mu}m_{\mu}(x)m_{\lambda}(t_1,t_2,\ldots)
\end{eqnarray*}
It follows that
\[
\omega(h^{\dd}_{\lambda})=\Sum_{\mu}N^{\dd}_{\lambda\mu}m_{\mu}.
\]
Comparing with (\ref{w(ed)}), the result follows.
\end{proof}
To end this section, we give an identity on $m_{\lambda}(\xi,\ldots,\xi^{d})$ whose computation is messy in general. If the partitions $\lambda=(\lambda_1,\lambda_2,\ldots)$ and $\mu^i\vdash\lambda_i$ for each $i$, after coordinates reordering, the partition $\mu=(\mu^1,\mu^2,\ldots)$ is called a partition of $\lambda$ and denoted $\mu\vdash \lambda$.

\begin{proposition}\label{identity}
Let $\xi=e^{2\pi i/d+1}$ and $\eta=e^{2\pi i/d'+1}$ be primitive unit roots, and denote $\vec{\xi}=(\xi,\ldots,\xi^d)$ and $\vec{\eta}=(\eta,\ldots, \eta^{d'})$. For any partition $\mu\vdash n$ with $\ell(\mu)\le dd'$, we have
\[
\Sum_{\mu=\left(\mu^1,\mu^2,\ldots\right)\vdash \lambda}\left(m_{\lambda}(\vec{\xi})
\Prod_{i=1}^{\ell(\lambda)}m_{\mu^{i}}(\vec{\eta})\right)=\Sum_{\mu
=\left(\mu^1,\mu^2,\ldots\right)\vdash \lambda}\left(m_{\lambda}(\vec{\eta})
\Prod_{i=1}^{\ell(\lambda)}m_{\mu^{i}}(\vec{\xi})\right).
\]
\end{proposition}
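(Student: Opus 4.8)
The plan is to obtain the identity as the monomial‑basis expansion of the symmetry
\[
h^{[d]}_{\lambda}(\vec{\eta})=h^{[d']}_{\lambda}(\vec{\xi})\qquad\text{for every partition }\lambda ,
\]
which I would prove first. For this, start from Theorem \ref{main-1}, $h^{[d]}_{\lambda}=\sum_{\mu\vdash n}z^{-1}_{\mu}D^{[d]}_{\mu}R_{\mu\lambda}p_{\mu}$, and specialize the variables to the $d'$‑element alphabet $\vec{\eta}=(\eta,\ldots,\eta^{d'})$. The evaluation of power sums at powers of a primitive root carried out in the proof of Theorem \ref{main-1} applies verbatim with $d$ replaced by $d'$: one has $p_{n}(\vec{\eta})=d'$ when $(d'+1)\mid n$ and $p_{n}(\vec{\eta})=-1$ otherwise, hence $p_{\mu}(\vec{\eta})=(-1)^{\ell(\mu)}D^{[d']}_{\mu}$. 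Substituting gives
\[
h^{[d]}_{\lambda}(\vec{\eta})=\sum_{\mu\vdash n}(-1)^{\ell(\mu)}z^{-1}_{\mu}R_{\mu\lambda}\,D^{[d]}_{\mu}D^{[d']}_{\mu},
\]
whose right‑hand side depends on $d$ and $d'$ only through the symmetric product $D^{[d]}_{\mu}D^{[d']}_{\mu}$; since interchanging $d$ and $d'$ in the same computation evaluates $h^{[d']}_{\lambda}(\vec{\xi})$, the displayed symmetry follows. (Equivalently, this is the relation $\omega^{[d']}(h^{[d]}_{\lambda})=\omega^{[d]}(h^{[d']}_{\lambda})$ of Corollary \ref{diagram-commute} read in the eigenbasis $\{p_{\mu}\}$.)

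Next I would unfold both sides into monomials. Because $h^{[d]}_{\lambda}=\prod_{i}h^{[d]}_{\lambda_{i}}$, and by Proposition \ref{m-expansion} applied to the one‑part partition $(\lambda_{i})$ there is a unique single‑row $[0,d]$‑matrix with row sum $\lambda_{i}$ and column sum $\mu^{i}$ exactly when every part of $\mu^{i}$ is at most $d$, one has $h^{[d]}_{\lambda_{i}}=\sum_{\mu^{i}\vdash\lambda_{i}}M^{[d]}_{(\lambda_{i})\mu^{i}}m_{\mu^{i}}$ with each $M^{[d]}_{(\lambda_{i})\mu^{i}}\in\{0,1\}$. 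Expanding the product over $i$ and evaluating at $\vec{\eta}$,
\[
h^{[d]}_{\lambda}(\vec{\eta})=\sum_{\mu=(\mu^{1},\mu^{2},\ldots)\vdash\lambda}\Bigl(\prod_{i=1}^{\ell(\lambda)}M^{[d]}_{(\lambda_{i})\mu^{i}}\Bigr)\prod_{i=1}^{\ell(\lambda)}m_{\mu^{i}}(\vec{\eta}),
\]
and symmetrically $h^{[d']}_{\lambda}(\vec{\xi})=\sum_{\mu\vdash\lambda}\bigl(\prod_{i}M^{[d']}_{(\lambda_{i})\mu^{i}}\bigr)\prod_{i}m_{\mu^{i}}(\vec{\xi})$. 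Equating these through the relation $h^{[d]}_{\lambda}(\vec{\eta})=h^{[d']}_{\lambda}(\vec{\xi})$ yields an equality of sums over $\mu\vdash\lambda$; it then remains to reconcile the two summation ranges, using that $m_{\mu^{i}}(\vec{\eta})=0$ unless $\ell(\mu^{i})\le d'$, that $m_{\mu^{i}}(\vec{\xi})=0$ unless $\ell(\mu^{i})\le d$, and that the factors $M^{[d]}_{(\lambda_{i})\mu^{i}}$, $M^{[d']}_{(\lambda_{i})\mu^{i}}$ bound the largest parts $\mu^{i}_{1}$; assembling these block‑wise is what the global constraint $\ell(\mu)\le dd'$ is meant to encode, and under it the surviving terms take the asserted form.

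I expect this last bookkeeping to be the genuine obstacle: one must verify that the two families of block conditions — bounds on $\mu^{i}_{1}$ from the $[0,d]$‑ and $[0,d']$‑matrix counts, and bounds on $\ell(\mu^{i})$ from non‑vanishing of $m_{\mu^{i}}$ at a finite alphabet — together with the coincidences among the numbers $\xi^{k}\eta^{l}$ that occur when $g=\gcd(d+1,d'+1)>1$, collapse to the single hypothesis $\ell(\mu)\le dd'$ with no term gained or lost. As an independent check of the first paragraph one can instead feed $y=\vec{\eta}$ into Proposition \ref{m-expansion-y}, obtaining $\sum_{\lambda}h^{[d]}_{\lambda}(\vec{\eta})m_{\lambda}(x)=\prod_{i}\prod_{j=1}^{d'}(1+x_{i}\eta^{j}+\cdots+x_{i}^{d}\eta^{jd})$, and evaluate the inner factor as
\[
\prod_{j=1}^{d'}\frac{1-(x_{i}\eta^{j})^{d+1}}{1-x_{i}\eta^{j}}=\frac{\bigl(1-x_{i}^{\lcm(d+1,\,d'+1)}\bigr)^{g}(1-x_{i})}{(1-x_{i}^{d+1})(1-x_{i}^{d'+1})},
\]
which is manifestly symmetric in $d$ and $d'$; this re‑derives $h^{[d]}_{\lambda}(\vec{\eta})=h^{[d']}_{\lambda}(\vec{\xi})$ but needs the very same $\gcd$‑accounting, so it is not a genuine shortcut past the difficulty.
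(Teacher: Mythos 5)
There is a genuine structural gap in your second step: the monomial unfolding you describe does not produce the identity of the proposition. Evaluating $h^{[d]}_{\lambda}$ at the alphabet $\vec{\eta}$ and expanding each factor $h^{[d]}_{\lambda_i}$ in the monomial basis yields, for a \emph{fixed} $\lambda$, a sum over tuples $(\mu^1,\mu^2,\ldots)$ weighted by the $0/1$ indicators $\prod_i M^{[d]}_{(\lambda_i)\mu^i}$; equating this with the analogous expansion of $h^{[d']}_{\lambda}(\vec{\xi})$ gives a true but \emph{different} identity, in which no factor $m_{\lambda}(\vec{\xi})$ or $m_{\lambda}(\vec{\eta})$ ever appears and in which one sums over $\mu$ for fixed $\lambda$ rather than over decompositions of a fixed $\mu$ into blocks $\mu^i\vdash\lambda_i$. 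Since $m_{\lambda}(\vec{\xi})$ is in general a complex number rather than $0$ or $1$, no amount of block-by-block bookkeeping can convert the indicator weights into the weights $m_{\lambda}(\vec{\xi})\prod_i m_{\mu^i}(\vec{\eta})$ of the statement; the ``genuine obstacle'' you flag at the end is not a reconciliation of summation ranges but a sign that the wrong expansion was chosen. Your first paragraph (the symmetry $h^{[d]}_{\lambda}(\vec{\eta})=h^{[d']}_{\lambda}(\vec{\xi})$ via $p_n(\vec{\eta})$) is correct, but it is an \emph{evaluation} statement, whereas the proposition is a statement about \emph{coefficients} in a basis expansion.

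The missing ingredient is the expansion of $h^{[d]}_n$ in the elementary basis: specializing $\prod_{i,j}(1+x_iy_j)=\sum_{\lambda}e_{\lambda}(x)m_{\lambda}(y)$ at $y=(-\xi,\ldots,-\xi^{d})$ gives $h^{[d]}_n=(-1)^n\sum_{\lambda\vdash n}m_{\lambda}(\vec{\xi})\,e_{\lambda}$, which is exactly where the weight $m_{\lambda}(\vec{\xi})$ comes from. The paper computes $\omega^{[d']}\omega^{[d]}(e_n)$ by applying this expansion twice (once for $[d]$, then for $[d']$ on each resulting $e_{\lambda_i}$), reads off the coefficient of $e_{\mu}$ as the left-hand side of the proposition, does the same for $\omega^{[d]}\omega^{[d']}(e_n)$, and concludes from the commutativity $\omega^{[d]}\omega^{[d']}=\omega^{[d']}\omega^{[d]}$ of Corollary \ref{diagram-commute} together with the linear independence of the $e_{\mu}$. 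Replacing your monomial unfolding by this $e$-basis unfolding makes your first paragraph unnecessary and closes the proof without any $\gcd$ accounting.
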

\begin{proof}
Let $\xi=e^{2\pi i/d+1}$ be the $(d+1)$-th primitive unit root. Since $\Prod_{i,j}(1+x_iy_j)=\Sum_\lambda e_\lambda(x) m_\lambda(y)$, we have
\begin{eqnarray*}
\Sum_{n\geq 0}h^{\dd}_{n}=\Prod_{i\geq 1}\Prod_{j=1}^{d}(1-x_{i}\xi^{j})=\Sum_{\lambda }(-1)^{|\lambda|}m_{\lambda}(\vec{\xi})e_{\lambda}(x),\\
\end{eqnarray*}
which implies $h^{\dd}_{n}=\Sum_{\lambda\vdash n}(-1)^{n}m_{\lambda}(\xi,\ldots,\xi^{d})e_{\lambda}$.
\begin{eqnarray*}
\omega^\d'\omega^\dd(e_{n})&=&\omega^{\d'}(h^{\dd}_{n})=\omega^{\d'}\left(\Sum_{\lambda\vdash n}(-1)^{n}m_{\lambda}(\vec{\xi})e_{\lambda}\right)
=\Sum_{\lambda\vdash n}(-1)^{n}m_{\lambda}(\vec{\xi})h^{\d'}_{\lambda}\\
&=&\Sum_{\lambda\vdash n}(-1)^{n}m_{\lambda}(\vec{\xi})\Prod_{i=1}^{\ell(\lambda)}\Sum_{\mu^i\vdash \lambda_{i}}(-1)^{\lambda_i}m_{\mu^i}(\vec{\eta})e_{\mu^i}\\
&=&\Sum_{\lambda\vdash n}\Sum_{\mu=\left(\mu^1,\mu^2,\ldots \right)\vdash \lambda}\left(m_{\lambda}(\vec{\xi})
\Prod_{i=1}^{\ell(\lambda)}m_{\mu^{i}}(\vec{\eta})\right)e_{\mu}.
\end{eqnarray*}
Similarly, we have
\[
\omega^{\dd}\omega^{\d'}(e_{n})=\Sum_{\lambda\vdash n}\Sum_{\mu=\left(\mu^1,\mu^2,\ldots \right)\vdash \lambda}\left(m_{\lambda}(\vec{\eta})
\Prod_{i=1}^{\ell(\lambda)}m_{\mu^{i}}(\vec{\xi})\right)e_{\mu}.
\]
Since $\omega^\d'\omega^\dd(e_{n})=\omega^{\dd}\omega^{\d'}(e_{n})$ by Proposition \ref{diagram-commute}, the result holds.
\end{proof}
\begin{example}Suppose $\mu=(1^{dd'})$. Note $m_\lambda(x_1,\ldots, x_l)\ne 0$ iff $\ell(\lambda)\le l$. From Proposition \ref{identity}, we have
\[
m_{(d'^{d})}(\vec{\xi})\Prod_{i=1}^{d}m_{(1^{d'})}(\vec{\eta})
=m_{(d^{d'})}(\vec{\eta})\Prod_{i=1}^{d'}m_{(1^{d})}(\vec{\xi})=1.
\]
\end{example}
\section{Acknowledgements}
We would like to thank Suijie Wang for his encouragement and help. None of this work would be possible without his insightful guidance. Thanks also goes to
Yue Zhou for reading the drafts and providing valuable comments.  In addition, we are particularly grateful to the anonymous referee for many kind suggestions.

\end{document}